\theoremstyle{plain}
\newtheorem{theorem}{Theorem}
\newtheorem{proposition}[theorem]{Proposition}
\newtheorem{lemma}[theorem]{Lemma}
\theoremstyle{definition}
\newcommand{\bC}{\mathbb{C}}
\newcommand{\cA}{\mathcal A}
\newcommand{\cB}{\mathcal B}
\begin{document}

\baselineskip 6.8mm

\title{A note on Huijsmans-de Pagter problem in ordered Banach algebras\footnote{To appear in \textit{Positivity}}}

\author{Roman Drnov\v sek}

\date{\today}

\begin{abstract}
We give an example of a positive element $a$ in some ordered Banach algebra $\cA$
such that its spectrum is equal to $\{1\}$ and it is not greater than or equal to the unit element of $\cA$. 
\end {abstract}

\maketitle

 \noindent
 {\it Key words}:  ordered Banach algebra, spectrum  \\
 {\it Math. Subj. Classification (2010)}: 47B60, 47A10, 46H05 \\

Let $\cA$ be a complex Banach algebra with unit $e$. A nonempty set $C$ is called a \textit{cone} of $\cA$ if 
$C + C \subseteq C$ and $\lambda C \subseteq C$ for all $\lambda \ge 0$. 
If, in addition, $C \cap (- C) = \{0\}$, then $C$ is said to be a \textit{proper} cone.
A cone $C$ of $\cA$ is \textit{closed} if it is a closed subset of $\cA$.
Any proper cone $C$ induces an ordering $\le$ in the following way: $a \le b \iff b - a \in C$.
It is easy to see that this ordering is a partial order (reflexive, antisymmetric, and transitive).
Clearly, $C = \{ a \in \cA: a \ge 0 \}$, and so elements of $C$ are called \textit{positive}.
A proper cone $C$ of $\cA$ is \textit{normal} if there exists a constant $\alpha > 0$ such that it follows from $0 \le a \le b$
that $\|a\| \le \alpha \|b\|$. 
A cone $C$ is called an \textit{algebra cone} of $\cA$ if $C \cdot C \subseteq C$ and $e \in C$.
In this case $\cA$ is called an \textit{ordered Banach algebra}.  For a general survey on ordered Banach algebras we refer to 
a recent paper \cite{MR17} and the references given there.

As it is mentioned in \cite{Zh93}, Huijsmans and de Pagter posed the following question: 
If $T$ is a positive operator on a complex Banach lattice $E$ with the spectrum $\sigma(T) = \{1\}$, does
it follow that $T$ is greater than or equal to the identity operator $I$?  
This question was studied in the papers \cite{Zh92}, \cite{Zh93} and \cite{Dr07}. 
In particular, Zhang \cite{Zh93} showed that the answer is affirmative 
under the additional hypothesis that there exist $\alpha \in (0, \frac{1}{2})$ 
and a constant $c \ge 0$ such that $\|T^{-n}\|=O(\exp (cn^\alpha ))$ as $n\to \infty.$ 
On the other hand, it was shown in \cite{Dr07} that  a positive operator $T$ on $E$ 
is greater than or equal to the identity operator $I$ provided 
$\lim_{n \rightarrow \infty} n \, \|(T - I)^n\|^{1/n} = 0$. This assumption clearly implies that $\sigma(T) = \{1\}$.
In general this problem that is important for the spectral theory of positive operators is still open. 

In \cite{Mo03} this question was investigated in the context of ordered Banach algebras,
that is, the paper gave some conditions under which a positive element $a$ in an  ordered Banach algebra
with $\sigma(a) = \{1\}$ is necessarily greater than or equal to the unit element.
However, \cite{Mo03} did not provide any example of a positive element $a$ in some ordered Banach algebra
such that $\sigma(a) = \{1\}$ and $a$ is not greater than or equal to the unit element.
In the present note we give such an example.
 
Let $\cB$ be a complex unital Banach algebra with unit $e$, and let $\cA$ be the algebra $\cB \times \bC$ 
endowed with multiplication  $(a, \xi) \cdot (b, \eta) = (a b, \xi \eta)$.
If we define the norm on the algebra $\cA$ by $\| (a, \xi) \| = \max\{ \|a\|, |\xi| \}$, then 
$\cA$ becomes a unital complex Banach algebra with the unit $(e,1)$.
Observe that $\sigma((a, \xi)) =  \sigma(a) \cup \{\xi\}$ for all $a \in \cB$ and $\xi \in \bC$.
Furthermore,  if $\cB$ is a $C^*$-algebra, then $\cA$ is also a $C^*$-algebra with the involution defined by
$(a, \xi)^* = (a^*, \overline{\xi})$. 

\begin{lemma} 
\label{lem} 
The Banach algebra $\cA$ is an ordered Banach algebra with the algebra cone
$$ K = \{ (a, \xi)  \in \cA : \|a\| \le \xi \}  $$
that is proper, closed and normal. 
Furthermore, if $e \neq a \in \cB$, $\| a \| = 1$ and $\sigma(a) = \{1\}$, then $(a, 1) \in K$, 
 $\sigma((a,1)) = \{1\}$ and $(a, 1) - (e,1) \not\in K$.
\end{lemma}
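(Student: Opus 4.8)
The plan is to verify the two assertions separately. The first is a routine check that $K$ meets all the stated requirements; the second will be almost immediate once we have the explicit description of $K$ together with the identity $\sigma((a,\xi)) = \sigma(a) \cup \{\xi\}$ recorded above.

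I would begin with the observation that if $(a,\xi) \in K$ then, since $\|a\|$ is a nonnegative real number, the relation $\|a\| \le \xi$ already forces $\xi$ to be a nonnegative real; thus $K \subseteq \cB \times [0,\infty)$. With this in hand, closure of $K$ under addition and under multiplication by nonnegative scalars is immediate from the triangle inequality and the homogeneity of the norm, closure under products is the estimate $\|ab\| \le \|a\|\,\|b\| \le \xi\eta$, and $(e,1) \in K$ because $\|e\| = 1$; so $K$ is an algebra cone and $\cA$ an ordered Banach algebra. Properness is equally quick: if $(a,\xi)$ and $(-a,-\xi)$ both lie in $K$ then $0 \le \xi$ and $0 \le -\xi$, so $\xi = 0$, and then $\|a\| \le 0$ gives $a = 0$. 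For closedness I would write
$$ K = \{ (a,\xi) \in \cA : \mathrm{Im}\,\xi = 0 \ \text{and}\ \mathrm{Re}\,\xi - \|a\| \ge 0 \}, $$
which exhibits $K$ as the intersection of the preimages of the closed sets $\{0\}$ and $[0,\infty)$ under two continuous real-valued functions on $\cA$.

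The one point that calls for a moment's thought is normality. Suppose $0 \le (a,\xi) \le (b,\eta)$, that is, $(a,\xi) \in K$ and $(b-a,\eta-\xi) \in K$. Then $0 \le \|a\| \le \xi$, so $\|(a,\xi)\| = \max\{\|a\|,\xi\} = \xi$; moreover $\|b-a\| \le \eta-\xi$ with $\eta-\xi \ge 0$, whence $\|b\| \le \|b-a\| + \|a\| \le \eta$ and $\xi \le \eta$, so $\|(b,\eta)\| = \max\{\|b\|,\eta\} = \eta \ge \xi = \|(a,\xi)\|$. Thus $K$ is normal, with normality constant $\alpha = 1$.

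For the final statement, let $a \in \cB$ satisfy $a \ne e$, $\|a\| = 1$ and $\sigma(a) = \{1\}$. Then $\|a\| = 1 \le 1$ yields $(a,1) \in K$, while the spectral identity gives $\sigma((a,1)) = \sigma(a) \cup \{1\} = \{1\}$. Finally $(a,1) - (e,1) = (a-e,0)$, and an element of the form $(c,0)$ lies in $K$ only if $\|c\| \le 0$, i.e.\ $c = 0$; since $a - e \ne 0$, we conclude $(a,1)-(e,1) \notin K$. In short, there is no real obstacle here; the only thing to watch is the implicit constraint that the scalar coordinate of a member of $K$ is a nonnegative real, which is exactly what makes the closedness and normality arguments go through so cleanly.
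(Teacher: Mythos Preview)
Your proof is correct and follows essentially the same route as the paper's: the paper merely declares that $K$ is a proper, closed algebra cone and then gives the same normality computation and the same verification of the ``furthermore'' clause, whereas you have filled in the routine details the paper omits. There is nothing to add.
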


\begin{proof}
It is easy to see that $K$ is an algebra cone that is proper and closed.   
To show its normality, assume $0 \le (a, \xi) \le (b, \eta)$, so that 
 $\|a\| \le \xi$ and $\| b- a\| \le \eta-\xi$. Then $\|a\| \le \xi \le \eta$, and so 
$\| (a, \xi) \| \le \eta \le \| (b, \eta)\|$. This shows that $K$ is a normal cone.

Assume that $e \neq a \in \cB$, $\| a \| = 1$ and $\sigma(a) = \{1\}$.
Then $(a, 1) \in K$,  $\sigma((a,1)) = \sigma(a) \cup \{1\}= \{1\}$ and 
$(a, 1) - (e,1) = (a-e, 0) \not\in K$, since $\|a-e\| > 0$.
\end{proof}

The ordered Banach algebra  $\cA$ was used in \cite{HS07} to prove some theorems for elements of $\cB$ 
by working in $\cA$. We also mention that $K$ is an ice-cream cone in the normed space $\cA$ 
(according to the definition in \cite{AT07}), as $(a, \xi) \in K$ iff   $\| (a, \xi) \| \le \xi$. 

We now prove the main result of this note.

\begin{theorem}
\label{thm}
There exist an ordered Banach algebra $\cA$ with a closed and normal algebra cone 
and a positive element $a \in \cA$ such that 
$\sigma(a) = \{1\}$ and $a$ is not greater than or equal to the unit element of $\cA$.
\end{theorem}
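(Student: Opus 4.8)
The plan is to obtain Theorem~\ref{thm} as an immediate consequence of Lemma~\ref{lem}. By that lemma it suffices to produce a complex unital Banach algebra $\cB$ together with an element $a_0\in\cB$ such that $a_0\neq e$, $\|a_0\|=1$ and $\sigma(a_0)=\{1\}$; then $\cA=\cB\times\bC$, equipped with the algebra cone $K=\{(b,\xi):\|b\|\le\xi\}$, is an ordered Banach algebra with a proper, closed and normal algebra cone, the positive element $a=(a_0,1)$ satisfies $\sigma(a)=\{1\}$, and $a-(e,1)=(a_0-e,0)\notin K$, so $a$ is not greater than or equal to the unit of $\cA$. (If $\cB$ is a $C^*$-algebra then, as noted before the lemma, $\cA$ is a $C^*$-algebra as well, so the example can even be arranged inside a $C^*$-algebra.) Note that, since $\|a_0\|\ge r(a_0)=1$ automatically, the condition $\|a_0\|=1$ is exactly the condition that $a_0$ be a \emph{contraction} with spectrum $\{1\}$ different from the unit.

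Before constructing such an $a_0$ I would record why the construction must be infinite dimensional and cannot be done too cheaply. If $\cB$ is finite dimensional and $\sigma(a_0)=\{1\}$, then $a_0=e+q$ with $q$ nilpotent; from $\|e+nq\|=\|(e+q)^n\|\le\|e+q\|^n\le1$ together with $\|e+nq\|\ge n\|q\|-1$ one gets $q=0$, hence $a_0=e$. So $q=a_0-e$ has to be a non-nilpotent quasinilpotent element of an infinite-dimensional algebra.

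For the example I would take $\cB$ to be the algebra of all bounded linear operators on the Hilbert space $L^2[0,1]$, with unit the identity operator $e$, let $V$ be the Volterra operator $(Vf)(x)=\int_0^x f(t)\,dt$, and set $a_0=e^{-V}=\sum_{n\ge0}(-V)^n/n!$. Three facts need checking. (i) $\sigma(a_0)=\{1\}$: $V$ is quasinilpotent, so $\sigma(V)=\{0\}$, and the spectral mapping theorem applied to $z\mapsto e^{-z}$ gives $\sigma(a_0)=\{e^{0}\}=\{1\}$. (ii) $a_0\neq e$: since $\log$ is a holomorphic inverse of the exponential near $0\in\sigma(-V)$, the holomorphic functional calculus gives $\log(a_0)=\log(e^{-V})=-V$, so $a_0=e$ would force $-V=\log(e)=0$, a contradiction. (iii) $\|a_0\|=1$: it is enough to prove $\|a_0\|\le1$, because $\|a_0\|\ge r(a_0)=1$ by (i). Here the key point is that $V$ is accretive: one computes $V+V^{*}$ to be the (positive) rank-one operator sending $f$ to the constant function with value $\int_0^1 f$, whence $\mathrm{Re}\langle Vg,g\rangle=\tfrac12\langle (V+V^{*})g,g\rangle\ge0$ for all $g$. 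Consequently, for $g(t)=e^{-tV}f$,
\[
\frac{d}{dt}\|g(t)\|^{2}=2\,\mathrm{Re}\langle g'(t),g(t)\rangle=-2\,\mathrm{Re}\langle Vg(t),g(t)\rangle\le0,
\]
so $\|e^{-tV}f\|\le\|f\|$ for every $t\ge0$; taking $t=1$ gives $\|a_0\|\le1$. With (i)--(iii) verified, Lemma~\ref{lem} applied to this $\cB$ and this $a_0$ yields the theorem.

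The main obstacle — indeed the only step that is not bookkeeping — is (iii), i.e.\ exhibiting a contraction with spectrum $\{1\}$ that is not the unit. Besides the finite-dimensional obstruction above, even inside our $\cB$ the most obvious candidates fail: for every $c>0$ one has $\|e-cV\|>1$, because $(e-cV)^{*}(e-cV)=e-c(V+V^{*})+c^{2}V^{*}V$ and the inequality $c\,V^{*}V\le V+V^{*}$ is false — $V^{*}V$ is injective while $V+V^{*}$ has infinite-dimensional kernel, so it breaks down on $\ker(V+V^{*})\setminus\{0\}$. What rescues the construction is that the contractivity of $e^{-V}$ comes from the dissipativity (one-parameter semigroup) estimate above rather than from any termwise bound. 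I therefore expect the part of the write-up needing genuine care to be the verification of (iii), and in particular the explanation of why $e^{-V}$ is a contraction although $e-V$ is not; everything else then follows formally from Lemma~\ref{lem}.
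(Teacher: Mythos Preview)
Your proof is correct and follows the paper's approach almost exactly: the paper also takes $\cB$ to be the bounded operators on $L^2[0,1]$, uses the Volterra operator $V$, and then invokes Lemma~\ref{lem}. The only difference is the choice of the element $a_0$: the paper sets $a_0=(I+V)^{-1}$ and quotes Halmos for $\|(I+V)^{-1}\|=1$, whereas you take $a_0=e^{-V}$ and give a self-contained contractivity argument via the accretivity of $V$. Both norm estimates rest on the same underlying fact $V+V^{*}\ge 0$; in fact the Halmos argument is simply $\|(I+V)f\|^{2}=\|f\|^{2}+2\,\mathrm{Re}\langle Vf,f\rangle+\|Vf\|^{2}\ge\|f\|^{2}$, so your semigroup computation and the paper's citation are two packagings of the same inequality. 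Your version has the advantage of being self-contained, and your remarks on why finite-dimensional $\cB$ and the naive candidates $I-cV$ fail are a welcome addition not present in the paper's proof.
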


\begin{proof}
Let $\cB$ be the Banach algebra of all bounded linear operators on the Hilbert space $L^2[0,1]$, 
and let $\cA = \cB \times \bC$ be an ordered Banach algebra as defined above. 
Let $V$ be the Volterra operator on $L^2[0,1]$, that is, the operator defined by 
$$ (V f)(x) = \int_0^x \! f(y) \, dy \ \ \ (f \in L^2[0,1] , \ x \in [0,1]) . $$
Since $\sigma(V) = \{0\}$, the operator $T = (I + V)^{-1}$ has the spectrum $\sigma(T) = \{1\}$, and it is not equal to the identity operator $I$.  
It is shown in \cite[Problem 190]{Ha82} that $\|T\|= 1$.  
Then it follows from Lemma \ref{lem} that $(T,1) \in  K$,  $\sigma((T,1)) = \{1\}$ and $(T, 1) - (I,1) \not\in K$.
\end{proof}

Infinite-dimensionality of the Hilbert space $L^2[0,1]$ is essential in the proof of Theorem \ref{thm}, as we have 
the following observation.

\begin{proposition}
Let $\cB$ be the Banach algebra of all linear operators on a finite-dimensional Hilbert space,
and let $\cA = \cB \times \bC$ be an ordered Banach algebra as defined above. 
If $(A, \xi) \in K$ with $\sigma((A, \xi)) = \{1\}$, then $(A, \xi)$ is equal to the unit element $(I,1)$ of $\cA$.
\end{proposition}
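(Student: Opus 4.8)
The plan is to strip the statement down to a purely finite-dimensional linear-algebra assertion and then exclude a nontrivial nilpotent part by looking at the growth of the powers of $A$.

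First I would unwind the hypotheses. Write $\cB = B(H)$ with $H$ a nonzero Hilbert space of dimension $n$. By the observation recorded before Lemma~\ref{lem}, $\sigma((A,\xi)) = \sigma(A) \cup \{\xi\}$, so $\sigma((A,\xi)) = \{1\}$ forces $\xi = 1$ and $\sigma(A) \subseteq \{1\}$; since $\sigma(A) \neq \emptyset$, in fact $\sigma(A) = \{1\}$. Moreover $(A,\xi) \in K$ means precisely $\|A\| \le \xi = 1$. Thus the whole statement reduces to the claim: if an operator $A$ on the finite-dimensional space $H$ satisfies $\|A\| \le 1$ and $\sigma(A) = \{1\}$, then $A = I$; granting this, $(A,\xi) = (I,1)$ is the unit of $\cA$.

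For the claim I would set $N := A - I$. Since $\sigma(A) = \{1\}$, the characteristic polynomial of $A$ is $(\lambda-1)^n$, so Cayley--Hamilton gives $N^n = 0$, i.e.\ $N$ is nilpotent. Suppose $N \neq 0$ and let $m$ be the largest integer with $N^m \neq 0$; then $1 \le m \le n-1$. For $k \ge m$ the binomial expansion yields
$$ A^k = (I+N)^k = \sum_{j=0}^{m} \binom{k}{j} N^j , $$
hence
$$ \frac{1}{\binom{k}{m}}\, A^k = N^m + \sum_{j=0}^{m-1} \frac{\binom{k}{j}}{\binom{k}{m}}\, N^j . $$
For each fixed $j < m$ one has $\binom{k}{j}/\binom{k}{m} = \Theta(k^{j-m}) \to 0$ as $k \to \infty$, so the right-hand side converges to $N^m$; but $\|A^k\| \le \|A\|^k \le 1$, so the left-hand side has norm at most $1/\binom{k}{m} \to 0$. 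Therefore $N^m = 0$, contradicting the choice of $m$. Hence $N = 0$, so $A = I$ and $(A,\xi) = (I,1)$.

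I do not anticipate a genuine obstacle: the only points requiring care are the implication $\sigma(A)=\{1\} \Rightarrow A-I$ nilpotent and the elementary limit $\binom{k}{j}/\binom{k}{m}\to 0$. It is worth noting that the Hilbert-space structure is never used; what is essential is finite-dimensionality, entering through the nilpotency of $A-I$. In infinite dimensions $A-I$ need only be quasinilpotent, and the Volterra example of Theorem~\ref{thm} shows this weaker conclusion does not suffice. (In the Hilbert setting one could alternatively use that an eigenvector of a contraction for a unimodular eigenvalue is also an eigenvector of the adjoint, so that $\ker(A-I)$ reduces $A$, and then induct on $\dim H$; the power-growth computation above is shorter.)
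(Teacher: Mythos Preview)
Your argument is correct and fully detailed, but it proceeds along a different line than the paper's. The paper first observes, as you do, that $\xi=1$, $\sigma(A)=\{1\}$, and then (using the spectral radius) $\|A\|=1$; it then invokes the Schur decomposition to write $A$ as unitarily equivalent to an upper-triangular matrix with $1$'s on the diagonal, and simply asserts that such a matrix has operator norm $1$ only when it is the identity. You instead avoid any structural decomposition: you deduce from Cayley--Hamilton that $N=A-I$ is nilpotent, and then force $N=0$ via the polynomial growth of $\|(I+N)^k\|$ against the contraction bound $\|A^k\|\le 1$. Your route is somewhat longer but has two virtues: it is self-contained (the paper's final step about triangular contractions with unit diagonal is left to the reader), and, as you rightly note, it never uses the Hilbert-space structure, so the same proof works verbatim for $\cB$ any finite-dimensional unital Banach algebra. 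The Schur-based argument, by contrast, is specific to matrices with the operator norm.
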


\begin{proof}
Since $\sigma((A, \xi)) = \{1\}$ and $\|A\| \le \xi$, we have  $\sigma(A) = \{1\}$, $\xi = 1$, 
$\|A\| = 1$, and $A$ is unitarily equivalent to triangular matrix 
that has only 1's on the diagonal. Its norm can be $1$ only in the case when $A= I$.
\end{proof}

\vspace{3mm}
{\it Acknowledgment.} The author acknowledges the financial support from the Slovenian Research Agency  (research core funding No. P1-0222).

\vspace{2mm}

\baselineskip 6mm
\noindent
Roman Drnov\v sek \\
Department of Mathematics \\
Faculty of Mathematics and Physics \\
University of Ljubljana \\
Jadranska 19 \\
SI-1000 Ljubljana, Slovenia \\
e-mail : roman.drnovsek@fmf.uni-lj.si 

\end{document}